\newcommand{\N}[1]{\mathcal{N}\left(#1\right)}
\newcommand{\eps}{\varepsilon}
\newcommand{\vass}[1]{\left| #1\right|}
\DeclareMathOperator{\exO}{ex}
\newcommand{\ex}[1]{\exO\left(#1\right)}
\renewcommand{\epsilon}{\varepsilon}
\theoremstyle{plain}
\newtheorem{theorem}{Theorem}
\theoremstyle{remark}
\theoremstyle{definition}
\title{Density of small diameter subgraphs in $K_r$-free graphs}
\author[E. K. Hng]{Eng Keat Hng}
\address{(EKH) Czech Academy of Sciences, Institute of Computer Science, Pod Vod\'arenskou v\v{e}\v{z}\'i~2, 182~07 Prague, Czech~Republic}
\email{hng@cs.cas.cz}
\author[D. Mergoni Cecchelli]{Domenico Mergoni Cecchelli}
\address{(DMC) London School of Economics, Department of Mathematics, Houghton Street, London WC2A 2AE, UK}
\email{d.mergoni@lse.ac.uk}
\date{July 2022}
\thanks{Research supported by Czech Science Foundation project GX21-21762X and with institutional support RVO:67985807}
\begin{document}

\maketitle

\begin{abstract}
We denote by $\ex{n, H, F}$ the maximum number of copies of $H$ in an $n$-vertex graph that does not contain $F$ as a subgraph. 
Recently, Grzesik, Gy\H{o}ri, Salia, Tompkins considered conditions on $H$ under which $\ex{n, H, K_r}$ is asymptotically attained at a blow-up of $K_{r-1}$, and proposed a conjecture. In this note we disprove their conjecture.
\end{abstract}

\section{Introduction}

Let $G$, $H$ and $F$ be graphs. We say that $G$ is \emph{$F$-free} if it does not contain $F$ as a (not necessarily induced) subgraph. Write $\N{H,G}$ for the number of (unlabelled) copies of $H$ in $G$. We denote by $\ex{n,H,F}$ the generalized Tur\'an number, which is the maximum value of $\N{H,G}$ in an $F$-free graph $G$ on $n$ vertices. 

A prominent topic of research in extremal graph theory is the behaviour of $\ex{n,H,F}$. The classical case $H=K_2$ was studied by Tur\'an~\cite{Turan} for $F=K_r$ and then by Erd\H{o}s, Simonovits and Stone~\cite{ErdosSimonovits, ErdosStone} for general $F$. Alon and Shikhelman~\cite{AlonShikhelman} introduced and initiated the systematic study of $\ex{n,H,F}$ for general choices of $H$ and $F$.

Extending this line of study, Lidick\'y and Murphy~\cite{LidickyMurphy} conjectured that given a graph $H$ and an integer $r>\chi(H)$ (where $\chi(H)$ denotes the chromatic number of $H$),  there exists a complete $(r-1)$-partite graph with asymptotically as many copies of $H$ as possible in a $K_r$-free graph (which is by definition $\ex{n,H,K_r}$). Grzesik, Gy\H{o}ri, Salia and Tompkins~\cite{GrzesikGyoriSaliaTompkins} recently showed that for every $r\ge3$ there is a counterexample to the conjecture of Lidick\'y and Murphy. We need some extra definitions to describe their counterexamples.

Let us denote with $P_n$ the path on $n$ vertices and, given a graph $G$ and a positive integer $r$, let us write $G^r$ for the graph obtained from $G$ by joining every pair of vertices at distance at most $k$ in $G$. Moreover, given a graph $H$ and a vertex $v$ in $V(H)$, let the graph $H'$ be obtained from $H$ by replacing $v$ with an independent set $I_v$ of size $a$ and by adding a complete bipartite graph between $N_H(v)$ and $I_v$. We say that $H'$ is obtained from $H$ by \emph{blowing up} $v$ by a factor of $a$;  we also say that $H'$ is a blow-up of $H$.

As mentioned above, Grzesik, Gy\H{o}ri, Salia and Tompkins~\cite{GrzesikGyoriSaliaTompkins} constructed a counterexample to Lidick\'y and Murphy's conjecture. This was done by building a sequence $(F_r)_{r\ge3}$ of graphs where $F_r$ is obtained from $P^{r-2}_{2r}$ by blowing up its two endvertices by a large factor dependent on $r$. Let us note that Gerbner~\cite{Gerbner} very recently extended the construction of Grzesik, Gy\H{o}ri, Salia and Tompkins~\cite{GrzesikGyoriSaliaTompkins} to general $F$-free graphs. In the same paper, Grzesik, Gy\H{o}ri, Salia and Tompkins~\cite{GrzesikGyoriSaliaTompkins} proposed a new version of the conjecture.

\begin{restatable}[Grzesik, Gy\H ori, Salia and Tompkins~\cite{GrzesikGyoriSaliaTompkins}]{conjecture}{ConJ}\label{conjecture}
If $G$ is a graph with $\chi(G)<r$ and diameter at most $2r-2$, then $\ex{n, G, K_r}$ is asymptotically achieved by a blow-up of $K_{r-1}$.
\end{restatable}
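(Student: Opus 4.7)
The plan is to treat the conjecture as a statement about the asymptotic structure of a $K_r$-free host graph and argue it by combining the regularity lemma, a structural claim specific to the diameter hypothesis, and a standard Lagrangian optimisation.

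First I would take an extremal sequence $(G_n)_{n\to\infty}$ of $K_r$-free $n$-vertex graphs maximising $\N{G,\cdot}$ and apply Szemer\'edi's regularity lemma to each $G_n$. After removing a negligible number of edges (those inside clusters, between irregular pairs, and in sparse pairs) the resulting graph $G_n'$ loses only $o(n^{\vass{V(G)}})$ copies of $G$, and its reduced graph $R_n$ is $K_r$-free. By the counting/embedding lemma, up to lower-order terms, $\N{G,G_n}$ equals the sum over homomorphisms $\phi\colon V(G)\to V(R_n)$ of the product of cluster sizes along $\phi$. So the problem reduces to understanding which homomorphic images of $G$ inside a $K_r$-free cluster graph support the most copies.

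The heart of the argument, and the only place where the diameter hypothesis enters, is the following structural claim: if $R$ is $K_r$-free and $G$ has $\chi(G)<r$ and $\mathrm{diam}(G)\le 2r-2$, then every homomorphism $\phi\colon V(G)\to V(R)$ factors through an $(r-1)$-chromatic subgraph of $R$, that is, through a blow-up of $K_{r-1}$ contained in $R$. I would prove this by induction on $r$. The base case $r=3$ amounts to showing that a triangle-free graph $R$ cannot contain any diameter-$4$ homomorphic image of a bipartite graph that is not itself bipartite, which follows from a short odd-cycle argument together with the short-path condition. For the inductive step, I would fix a vertex $u\in\phi(V(G))$ and consider its neighbourhood $N_R(u)$, which is $K_{r-1}$-free; the vertices of $\phi(V(G))$ at distance $\le 2(r-1)-2$ from $u$ within $\phi(V(G))$ must land inside a common $(r-2)$-partite neighbourhood by induction, and the diameter bound $2r-2$ is exactly tight to cover all of $\phi(V(G))$ by successive inductive pieces glued at $u$.

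Granting the structural claim, every relevant homomorphism from $G$ into $R_n$ factors through a copy of $K_{r-1}$ in $R_n$, so $\N{G,G_n}$ is asymptotically bounded by the maximum number of copies of $G$ in a blow-up of $K_{r-1}$ on $n$ vertices. A direct Lagrangian argument then shows this is maximised by a particular choice of part sizes, and the matching lower bound is exhibited by the corresponding blow-up itself, proving asymptotic optimality of some blow-up of $K_{r-1}$.

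The main obstacle is clearly the inductive structural claim in the middle paragraph. It is where the numerology $2r-2$ has to match the chromatic hypothesis $\chi(G)<r$ exactly, and it is by no means clear that the diameter bound suffices to force the homomorphic image to factor through $K_{r-1}$ rather than through some more exotic $(r-1)$-chromatic $K_r$-free configuration in $R$; in particular, the inductive gluing at a single vertex $u$ may fail if short paths in $G$ are routed in $R$ through vertices outside $\phi(V(G))$, and handling this would presumably require a more delicate covering argument than sketched above.
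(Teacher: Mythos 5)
You cannot prove this statement, because it is false for every $r\ge4$ --- disproving it is the entire content of this paper (Theorem~\ref{thm:ConjIsWrong}) --- and the fatal gap sits exactly where you yourself flagged it: the structural claim that every homomorphism of $G$ into a $K_r$-free graph factors through a blow-up of $K_{r-1}$. Here is a concrete counterexample to that claim. Let $H$ be the join of $K_{r-3}$ with a path $P_6$ whose endvertices are $x$ and $y$, and let $G$ be obtained from $H$ by blowing up $x$ and $y$ by a factor $a$. Then $\chi(G)=r-1$, and thanks to the clique join $\mathrm{diam}(G)=2\le 2r-2$. Identifying the two blown-up classes $X$ and $Y$ gives a homomorphism from $G$ onto a blow-up of the graph $Q$ obtained by joining $K_{r-3}$ to $C_5$; since $C_5$ is triangle-free, $Q$ is $K_r$-free, but $\chi(Q)=r$, so $Q$ admits no homomorphism into $K_{r-1}$ and this homomorphism cannot factor through any $(r-1)$-partite subgraph of the target. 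Your proposed induction breaks precisely because the diameter hypothesis can be satisfied ``for free'' by a universal clique: every short path routes through $K_{r-3}$, so the bound $2r-2$ exerts no control over the parity or odd-girth behaviour of the image, and a bipartite piece of $G$ (the $P_6$) is free to wrap onto an odd cycle (the $C_5$) in the image. This is also why the paper's counterexample needs $r\ge4$: for $r=3$ there is no clique to join, and your base case is not what is refuted here.

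Worse for your strategy, this ``exotic'' homomorphism is not a lower-order term that the regularity reduction could absorb --- it dominates. In the unique $(r-1)$-colouring of $G$, the classes $X$ and $Y$ are forced into different colours, so in any complete $(r-1)$-partite graph $T$ on $n$ vertices the number of labelled copies of $G$ is at most $n^{r+1}\left(n/2\right)^{2a}$. By contrast, the blow-up $S$ of $Q$ in which the class of the identified vertex $z$ has size $n-(r+1)\lfloor\eps n\rfloor$ and every other class has size $\lfloor\eps n\rfloor$ is $K_r$-free and contains at least $\lfloor\eps n\rfloor^{r+1}\left(n-(r+1)\lfloor\eps n\rfloor\right)^{2a}-o(n^{\vass{G}})$ labelled copies of $G$, because all $2a$ blow-up vertices may now be chosen from a single class of size roughly $n$. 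Choosing $a$ large relative to $\eps$ and $r$ makes $\eps^{r+1}(1-(r+1)\eps)^{2a}$ exceed $2^{-2a}$ by any prescribed factor $1/\delta$, so the $(r-1)$-partite optimum is off by an unbounded multiplicative factor, not merely an $o(1)$ error. Your regularity reduction and the final Lagrangian step are standard and unobjectionable as far as they go; the lesson is that the optimisation over homomorphic images in a $K_r$-free reduced graph genuinely must range over $K_r$-free targets of chromatic number $r$, such as the join of $K_{r-3}$ with $C_5$, and for this $G$ the optimum is attained at one of them.
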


\section{Counterexamples to conjecture \ref{conjecture}}

Our main result states that for $r\ge4$ there exists a counterexample to Conjecture~\ref{conjecture}.

\begin{theorem}\label{thm:ConjIsWrong}
For any $r\ge4$ and $\delta>0$ there is a graph $G$ of diameter $2$ such that for all sufficiently large $n$ and any $(r-1)$-partite graph $T$ on $n$ vertices we have
\[\delta\cdot\ex{n, G, K_{r}}>\N{G, T}.\]
\end{theorem}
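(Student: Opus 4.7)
The strategy is to exhibit, for every $r \ge 4$ and $\delta > 0$, a specific diameter-$2$ graph $G$ with $\chi(G) \le r-1$ and a $K_r$-free host graph $H$ on $n$ vertices that is not $(r-1)$-partite, such that $\N{G, H}$ is asymptotically much larger than $\N{G, T}$ for every $(r-1)$-partite $T$ on $n$ vertices.

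A natural template for $H$ is the graph join $R = C_5 \ast K_{r-3}$. This graph is $K_r$-free since $\omega(R) = 2 + (r-3) = r-1$; it has chromatic number $\chi(R) = 3 + (r-3) = r$, so is not $(r-1)$-partite; and it has diameter~$2$ because $C_5$ and $K_{r-3}$ both do and all cross-edges are present. We take $H$ to be a balanced blow-up of $R$ on $n$ vertices, which inherits these three properties. For $G$ we take a blow-up of the diameter-$2$, $(r-1)$-chromatic subgraph $R_0 = P_3 \ast K_{r-3} \subseteq R$ (equivalently, $K_r$ minus an edge) by a parameter $a$ that depends on $\delta$. Then $G$ inherits diameter~$2$ and $\chi(G) = r-1$ from $R_0$.

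We first establish a lower bound $\N{G, H} \ge c_1 \, n^{|V(G)|}$ by embedding $G$ into $H$ blow-up-class by blow-up-class: for each vertex $v \in V(R_0) \subseteq V(R)$ we inject the $a$ copies of $v$ in $G$ into the corresponding blow-up class of $H$ (of size $\Theta(n)$), and then sum over the distinct embeddings of $R_0$ into $R$. Next we establish an upper bound $\N{G, T} \le c_2 \, n^{|V(G)|}$ for every $(r-1)$-partite $T$. By monotonicity we may assume $T = K_{n_1, \ldots, n_{r-1}}$; then
\[
\N{G, T} = \frac{1}{|\Aut(G)|} \sum_{\phi} \prod_{i=1}^{r-1} n_i^{|\phi^{-1}(i)|} + O\bigl(n^{|V(G)|-1}\bigr),
\]
where $\phi$ ranges over proper $(r-1)$-colorings of $G$, and $c_2 \, n^{|V(G)|}$ is the maximum of this expression subject to $\sum_i n_i = n$. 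Choosing $a$ large enough that $c_1 / c_2 > \delta^{-1}$, and then $n$ large enough, we obtain $\delta \cdot \ex{n, G, K_r} \ge \delta \cdot \N{G, H} > \N{G, T}$ for every $(r-1)$-partite $T$ on $n$ vertices.

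The main obstacle is showing that the ratio $c_1/c_2$ can be made arbitrarily large by increasing $a$. The delicate point is that making one $n_i$ close to $n$ can inflate the terms arising from proper colorings of $G$ that concentrate vertices in a single color class, so it is not immediate that $(r-1)$-partite hosts are beaten by $H$. The argument must exploit the structure of $G$ -- specifically, the fact that its proper $(r-1)$-colorings, inherited from $R_0$, distribute vertices across color classes in a way incompatible with the extra embedding flexibility provided by the odd-cycle factor of the non-$(r-1)$-partite host $H$.
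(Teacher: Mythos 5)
Your choice of $G$ is fatally flawed. In $R_0 = P_3 \ast K_{r-3} = K_r$ minus an edge, the two non-adjacent vertices $u,v$ are twins: $N(u)=N(v)$. Hence \emph{any} blow-up of $R_0$ is a complete $(r-1)$-partite graph with parts $U\cup V,\,W,\,K_1,\dots,K_{r-3}$ --- that is, your $G$ is itself a blow-up of $K_{r-1}$, exactly the kind of graph for which the conjecture is expected to hold, so it cannot serve as a counterexample. Concretely, for $r=4$ your $G$ is $K_{2a,a,a}$, your host $H$ is the balanced blow-up of the wheel $C_5\ast K_1$ with six classes of size $n/6$, and a short case analysis of which triples of independent, pairwise completely joined sets exist in $H$ gives $\N{G,H}=O\bigl((n^2/18)^{2a}\bigr)$, whereas $T=K_{n/2,n/4,n/4}$ already gives $\N{G,T}=\Omega\bigl((n^2/8)^{2a}\bigr)$. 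So the ratio $c_1/c_2$ that you need to send to infinity in fact tends to $0$ as $a$ grows; the ``main obstacle'' you flag at the end is not merely unresolved but unresolvable for this $G$. (A secondary issue: even with a correct $G$, a \emph{balanced} blow-up of $C_5\ast K_{r-3}$ is not the right host; one needs a highly unbalanced one.)

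The paper's construction repairs precisely this point. Take $H_0=P_6\ast K_{r-3}$ and blow up only the two endvertices $x,y$ of the $P_6$ by a factor $a$, giving independent sets $X,Y$. Since $x$ and $y$ are at odd distance $5$ on the path, the $(r-1)$-colouring of $G$ is unique up to relabelling and forces $X$ and $Y$ into \emph{different} colour classes, so any complete $(r-1)$-partite $T$ admits at most $n^{r+1}(n/2)^{2a}$ labelled copies of $G$, the factor $(n/2)^{2a}$ coming from $n_i^a n_j^a\le(n/2)^{2a}$ for $i\ne j$. On the other hand, identifying $x$ and $y$ turns the $P_6$ into a $C_5$, and $C_5\ast K_{r-3}$ is still $K_r$-free; blowing up the identified vertex by $n-(r+1)\lfloor\eps n\rfloor$ and every other vertex by $\lfloor\eps n\rfloor$ yields a $K_r$-free host in which all of $X\cup Y$ (an independent set of $G$ with the correct joint neighbourhood) lands in a single class of size $(1-(r+1)\eps)n>n/2$, giving at least $\lfloor\eps n\rfloor^{r+1}\bigl((1-(r+1)\eps)n\bigr)^{2a}-o(n^{\vass{G}})$ copies, which beats the multipartite bound once $a$ is large. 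The essential mechanism --- a large independent set of $G$ that every proper $(r-1)$-colouring must split, but that a triangle-free non-bipartite quotient can keep whole --- is exactly what your twin vertices destroy.
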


\begin{proof}
Let $r\ge4$. Let $H$ be the graph obtained from the disjoint union of a copy $K$ of $K_{r-3}$ and a copy $P$ of $P_6$ by inserting all edges between $V(K)$ and $V(P)$. Write $x$ and $y$ for the two endvertices of $P$ in $H$. Several examples of $H$ are depicted in Figure~\ref{fig:counterexamples} with the vertices $x$ and $y$ marked. Fix a positive constant $\eps<\frac{1}{100(r+1)}$ and a positive integer $a$ such that $\frac{1}{2}\delta\eps^{r+1}(1-\eps(r+1))^{2a} \ge \frac{1}{2^{2a}}$. Write $G$ for the graph obtained from $H$ by blowing up both $x$ and $y$ by a factor of $a$. Let $X$ and $Y$ be the independent sets corresponding to $x$ and $y$ respectively. Observe that the diameter of $G$, which is equal to the diameter of $H$, is exactly $2$. Observe that $\chi(G)<r$.

Let $T$ be a complete $(r-1)$-partite graph on $n$ vertices. Let us count the number of labelled copies of $G$ in $T$. Observe that $G$ has a unique $(r-1)$-colouring (up to relabelling of the colours), and this colouring has the property that all vertices in $X$ get the same colour, all vertices in $Y$ get the same colour, and these two colours are different. Hence, there are at most $n^{r+1}\left(\frac{n}{2}\right)^{2a}$ labelled copies of $G$ in $T$.

On the other hand, let us denote by $Q$ the graph obtained from the disjoint union of a copy $K$ of $K_{r-3}$ and a copy $C$ of $C_5$ by inserting all edges between $V(K)$ and $V(C)$; let us note that $Q$ can be obtained from $H$ by contracting the vertices $x$ and $y$ to a new vertex $z$.  Write $S$ for the graph obtained from $Q$ by blowing up the vertex $z$ by a factor of $n-(r+1)\lfloor \epsilon n\rfloor$ and every other vertex by a factor of $\lfloor \epsilon n\rfloor$. Several examples of $S$ are depicted in Figure~\ref{fig:counterexamplesquotient}. Since $C_5$ is triangle-free, we have that $S$ is $K_r$-free. Now since we have at least $\lfloor \epsilon n\rfloor^{r+1}$ choices for the vertices of $Q\setminus z$ and at least $n-(r+1)\lfloor \epsilon n\rfloor$ choices for the vertices in $X \cup Y$, the number of labelled copies of $G$ in $S$ is at least
\[ \lfloor \epsilon n\rfloor^{r+1}\left( n-(r+1)\lfloor \epsilon n\rfloor\right)^{2a} - o(n^{\vass{G}}). \]
Then, by the definition of $\ex{n, G, K_r}$ and our choice of $\eps$ and $a$ we have
\[\delta \cdot \ex{n, G, K_r} \ge \delta \cdot \N{G,S} > \N{G,T}\]
as required.
\end{proof}

\begin{figure}
    \centering
\begin{tikzpicture}[scale=1]
\begin{scope}[shift={(0,0)}]
\foreach \x in {1, 2, 3, 4, 5, 6, 7, 8}{
\node[circle, draw, color=black, fill=black!20, inner sep=0pt, minimum width=4pt] (\x) at ($(0,0)+(\x*45:1cm)$) {};
}

\foreach \x/\y in {a/0, b/1, c/2, d/3, e/4, f/5}{
\node[circle, draw, color=black, fill=black!20, inner sep=0pt, minimum width=4pt] (\x) at ($(0,0)+(30+\y*60:2cm)$) {};}

\foreach \x in {2, ..., 8}{
    \foreach \y in {1, ..., \x}{
        \draw[] (\x) -- (\y);
    }
}    

\foreach \x in {a, b, c, d, e, f}{
    \foreach \y in {1, 2, 3, 4, 5, 6, 7, 8}{
        \draw[] (\x) -- (\y);
    }
}

\draw[line width=1.5pt] (a) -- (b) -- (c) -- (d) -- (e) -- (f);
\foreach \x in {1, 2, 3, 4, 5, 6, 7, 8}{
\node[circle, draw, color=black, fill=black!20, inner sep=0pt, minimum width=4pt] (\x) at ($(0,0)+(\x*45:1cm)$) {};
}
\node[circle, draw, fill=black!20, inner sep=0pt, minimum width=4pt, label={0:\tiny{$x$}}] () at ($(0,0)+(30:2cm)$) {};
\node[circle, draw, fill=black!20, inner sep=0pt, minimum width=4pt, label={0:\tiny{$y$}}] () at ($(0,0)+(-30:2cm)$) {};
\foreach \x in {1, 2, 3, 4, 5, 6, 7, 8}{
\node[circle, draw, color=black, fill=black!20, inner sep=0pt, minimum width=4pt] (\x) at ($(0,0)+(\x*45:1cm)$) {};
}
\end{scope}
\begin{scope}[shift={(-5,0)}]
\foreach \x in {1, 2, 3, 4}{
\node[circle, draw, color=black, fill=black!20, inner sep=0pt, minimum width=4pt] (\x) at ($(0,0)+(\x*90:1cm)$) {};
}

\foreach \x/\y in {a/0, b/1, c/2, d/3, e/4, f/5}{
\node[circle, draw, color=black, fill=black!20, inner sep=0pt, minimum width=4pt] (\x) at ($(0,0)+(30+\y*60:2cm)$) {};}

\foreach \x in {1, 2, 3, 4}{
    \foreach \y in {1, 2, 3}{
        \draw[] (\x) -- (\y);
    }
}    

\foreach \x in {a, b, c, d, e, f}{
    \foreach \y in {1, 2, 3, 4}{
        \draw[] (\x) -- (\y);
    }
}

\draw[line width=1.5pt] (a) -- (b) -- (c) -- (d) -- (e) -- (f);
\foreach \x in {1, 2, 3, 4}{
\node[circle, draw, color=black, fill=black!20, inner sep=0pt, minimum width=4pt] (\x) at ($(0,0)+(\x*90:1cm)$) {};
}
\node[circle, draw, fill=black!20, inner sep=0pt, minimum width=4pt, label={0:\tiny{$x$}}] () at ($(0,0)+(30:2cm)$) {};
\node[circle, draw, fill=black!20, inner sep=0pt, minimum width=4pt, label={0:\tiny{$y$}}] () at ($(0,0)+(-30:2cm)$) {};
\end{scope}
\begin{scope}[shift={(-10,0)}]
\foreach \x in {1}{
\node[circle, draw, color=black, fill=black!20, inner sep=0pt, minimum width=4pt] (\x) at ($(0,0)$) {};
}

\foreach \x/\y in {a/0, b/1, c/2, d/3, e/4, f/5}{
\node[circle, draw, color=black, fill=black!20, inner sep=0pt, minimum width=4pt] (\x) at ($(0,0)+(30+\y*60:2cm)$) {};}

\foreach \x in {a, b, c, d, e, f}{
    \foreach \y in {1}{
        \draw[] (\x) -- (\y);
    }
}

\draw[line width=1.5pt] (a) -- (b) -- (c) -- (d) -- (e) -- (f);

\node[circle, draw, fill=black!20, inner sep=0pt, minimum width=4pt, label={0:\tiny{$x$}}] () at ($(0,0)+(30:2cm)$) {};
\node[circle, draw, fill=black!20, inner sep=0pt, minimum width=4pt, label={0:\tiny{$y$}}] () at ($(0,0)+(-30:2cm)$) {};
\end{scope}
\end{tikzpicture}
    \caption{Graphs $H$ for $r=4, 7, 11$}
    \label{fig:counterexamples}
\end{figure}

\begin{figure}
    \centering
\begin{tikzpicture}[scale=1]
\begin{scope}[shift={(0,0)}]
\foreach \x in {1, 2, 3, 4, 5, 6, 7, 8}{
\node[circle, draw, color=black, inner sep=0pt, minimum width=7pt] (\x) at ($(0,0)+(\x*45:1cm)$) {};
}

\foreach \x/\y in {a/0, b/1, c/2, d/3, e/4}{
\node[circle, draw, color=black, inner sep=0pt, minimum width=7pt] (\x) at ($(0,0)+(36.5+\y*72:2cm)$) {};}

\foreach \x in {2, ..., 8}{
    \foreach \y in {1, ..., \x}{
        \draw[] (\x) -- (\y);
    }
}    

\foreach \x in {a, b, c, d, e}{
    \foreach \y in {1, 2, 3, 4, 5, 6, 7, 8}{
        \draw[] (\x) -- (\y);
    }
}

\draw[line width=1.5pt] (a) -- (b) -- (c) -- (d) -- (e) -- (a);
\foreach \x in {1, 2, 3, 4, 5, 6, 7, 8}{
\node[circle, draw, color=black, inner sep=0pt, minimum width=7pt] (\x) at ($(0,0)+(\x*45:1cm)$) {};
}
\node[circle, draw, fill=white, inner sep=0pt, minimum width=15pt] () at ($(0,0)+(36.5:2cm)$) {};
\foreach \x in {1, 2, 3, 4, 5, 6, 7, 8}{
\node[circle, draw, color=black, fill=white, inner sep=0pt, minimum width=7pt] (\x) at ($(0,0)+(\x*45:1cm)$) {};
}
\end{scope}
\begin{scope}[shift={(-5,0)}]
\foreach \x in {1, 2, 3, 4}{
\node[circle, draw, color=black, inner sep=0pt, minimum width=7pt] (\x) at ($(0,0)+(\x*90:1cm)$) {};
}

\foreach \x/\y in {a/0, b/1, c/2, d/3, e/4}{
\node[circle, draw, color=black, fill=white, inner sep=0pt, minimum width=7pt] (\x) at ($(0,0)+(36.5+\y*72:2cm)$) {};}

\foreach \x in {1, 2, 3, 4}{
    \foreach \y in {1, 2, 3}{
        \draw[] (\x) -- (\y);
    }
}    

\foreach \x in {a, b, c, d, e}{
    \foreach \y in {1, 2, 3, 4}{
        \draw[] (\x) -- (\y);
    }
}

\draw[line width=1.5pt] (a) -- (b) -- (c) -- (d) -- (e) -- (a);
\foreach \x in {1, 2, 3, 4}{
\node[circle, draw, color=black, inner sep=0pt, minimum width=7pt] (\x) at ($(0,0)+(\x*90:1cm)$) {};
}
\node[circle, draw, fill=white, inner sep=0pt, minimum width=15pt] () at ($(0,0)+(36.5:2cm)$) {};
\foreach \x in {1, 2, 3, 4}{
\node[circle, draw, color=black, fill=white, inner sep=0pt, minimum width=7pt] (\x) at ($(0,0)+(\x*90:1cm)$) {};
}
\end{scope}
\begin{scope}[shift={(-10,0)}]
\foreach \x in {1}{
\node[circle, draw, color=black, inner sep=0pt, minimum width=7pt] (\x) at ($(0,0)$) {};
}

\foreach \x/\y in {a/0, b/1, c/2, d/3, e/4}{
\node[circle, draw, color=black, inner sep=0pt, minimum width=7pt] (\x) at ($(0,0)+(36.5+\y*72:2cm)$) {};}

\foreach \x in {a, b, c, d, e}{
    \foreach \y in {1}{
        \draw[] (\x) -- (\y);
    }
}

\draw[line width=1.5pt] (a) -- (b) -- (c) -- (d) -- (e) -- (a);

\node[circle, draw, fill=white, inner sep=0pt, minimum width=15pt] () at ($(0,0)+(36.5:2cm)$) {};

\end{scope}
\end{tikzpicture}
    \caption{Graphs $S$ for $r=4, 7, 11$}
    \label{fig:counterexamplesquotient}
\end{figure}

\bibliography{bibliography}
\bibliographystyle{amsplain}

\end{document}